
\documentclass[preprint, 10pt, english]{elsarticle}
\usepackage{amsthm}
\usepackage{amsmath}
\usepackage{latexsym, amssymb}
\usepackage{txfonts}
\usepackage{mathtools}
\usepackage{color}
\usepackage{babel}
\usepackage[all]{xy}
\usepackage[capitalise]{cleveref}

\newtheorem{thm}{Theorem}[section] 

\newtheorem{cor}[thm]{Corollary}

\newtheorem{lem}[thm]{Lemma}
\newtheorem{prop}[thm]{Proposition}

\newtheorem{ques}[thm]{Question}

\theoremstyle{definition}
\newtheorem{rem}[thm]{Remark}
\newtheorem{exmpl}[thm]{Example}

\newcommand\operA[2]{{\if!#2!\operatorname{#1}\else{\operatorname{#1}_{#2}^{\phantom{I}}}\fi}} 

%
%
%
%
%
%
%
%

\newcommand\tensor[1][]{{\otimes_{#1}}}


\newcommand{\Trace}[1][]{\if!#1!\operatorname{Tr}\else{\operatorname{Tr}_{#1}^{\phantom{I}}}\fi} 

\long\def\forget#1\forgotten{{}} %

\def\({\left(}
\def\){\right)}

\newcommand\Qf[1]{{\left<{#1}\right>}}              
\newcommand\Pf[1]{{\left<\left<{#1}\right>\right>}} 
\newcommand\MPf[1]{{\left<\left<{#1}\right]\right]}} 

\newcommand\LAY[3][]{{\begin{array}{c}\mbox{#2} \if#1!{}\else{+}\fi \\ \mbox{#3}\end{array}}}

\makeatletter

\def\ps@pprintTitle{%
 \let\@oddhead\@empty
 \let\@evenhead\@empty
 \def\@oddfoot{}%
 \let\@evenfoot\@oddfoot}

\newcommand{\bigperp}{%
  \mathop{\mathpalette\bigp@rp\relax}%
  \displaylimits
}

\newcommand{\bigp@rp}[2]{%
  \vcenter{
    \m@th\hbox{\scalebox{\ifx#1\displaystyle2.1\else1.5\fi}{$#1\perp$}}
  }%
}
\makeatother

\renewcommand{\geq}{\geqslant}
\renewcommand{\leq}{\leqslant}

\newif\iffurther
\furtherfalse

\journal{Bulletin of the Australian Mathematical Society}

\begin{document}
\begin{frontmatter}

\title{Common Slots of Bilinear and Quadratic Pfister Forms}

\author{Adam Chapman}
\ead{adam1chapman@yahoo.com}
\address{Department of Computer Science, Tel-Hai Academic College, Upper Galilee, 12208 Israel}

\begin{abstract}
We show that over any field $F$ of $\operatorname{char}(F)=2$ and 2-rank $n$, there exist $2^n$ bilinear $n$-fold Pfister forms that have no slot in common. This answers a question of  Becher's in the negative.
We provide an analogous result also for quadratic Pfister forms.
\end{abstract}

\begin{keyword}
Symmetric Bilinear Forms, Quadratic Forms, Pfister Forms, Fields of Characteristic Two, Valuations, Quaternion Algebras
\MSC[2010] 11E04 (primary); 11E81, 16K20 (secondary)
\end{keyword}
\end{frontmatter}

\section{Introduction}

The study of linkage of quadratic or bilinear $n$-fold Pfister forms and its connections to important field invariants, e.g. the $u$-invariant and the cohomological 2-dimension, has been the focus of several interesting papers in the last five decades. 
The first significant result was obtained in \cite{ElmanLam:1973} where it was shown for nonreal fields $F$ with $\operatorname{char}(F) \neq 2$ that if $I^n F$ is linked (i.e. every two anisotropic $n$-fold Pfister forms have an $(n-1)$-fold Pfister form as a common factor) then $I^{n+2} F=0$, and it was concluded that if $F$ is linked (i.e. $I^2 F$ is linked) then $u(F)$ can be either 0,1,2,4, or 8.
The analogous result for $I_q^n F$ when $\operatorname{char}(F)=2$ was given in \cite{ChapmanDolphin:2017} based on preliminary results obtained in \cite{Faivre:thesis}.

There is an intrinsic complication with quadratic forms when $\operatorname{char}(F)=2$: there exist two kinds of quadratic field extensions - separable and inseparable - which means that a maximal subfield shared by two given quaternion division algebras can be either a separable or inseparable extension of the center, and two quadratic $n$-fold Pfister forms can share either a quadratic or bilinear $(n-1)$-fold Pfister form as a common factor. We specify the terms ``separable" and ``inseparable" linkage accordingly. It was shown that inseparable linkage for $I_q^2 F$ implies separable linkage (\cite{Draxl:1975}) but not vice versa (\cite{Lam:2002}). This fact was generalized to $I_q^n F$ for arbitrary $n$ in \cite{Faivre:thesis} and to symbol division $p$-algebras of arbitrary prime degree in \cite{Chapman:2015}.

In \cite{Becher} the linkage property was extended to larger sets of $n$-fold Pfister forms: we say that $I^n F$ is $m$-linked if every $m$ anisotropic bilinear $m$-fold Pfister forms have an $(n-1)$-fold Pfister form as a common factor. It was shown for nonreal fields $F$ with $\operatorname{char}(F) \neq 2$ that if $I^n F$ is 3-linked then $I^{n+1} F=0$, and concluded that if $I^2 F$ is 3-linked then $u(F) \leq 4$.
The analogous results for $I_q^n F$ when $\operatorname{char}(F)=2$ were obtained in \cite{ChapmanDolphinLeep}.

Becher noticed that there exist fields $F$ for which $I^2 F$ is $m$-linked for any finite $m$ (such as number fields) and asked the following natural question:
\begin{ques}[{\cite[Question 5.2]{Becher}}]\label{q1}
Suppose $I^n F \neq 0$ and $I^n F$ is 3-linked. Does it follow that $I^n F$ is $m$-linked for every finite $m\geq 3$?
\end{ques}

In this paper we provide a negative answer to this question when $\operatorname{char}(F)=2$.
We also consider the analogous questions for $I_q^n F$. We say that $I_q^n F$ is separably (inseparably, resp.) $m$-linked if every $m$ anisotropic quadratic $n$-fold Pfister forms over $F$ have a quadratic (bilinear) $(n-1)$-fold Pfister form as a common factor.
The two analogues of Question \ref{q1} for $I_q^n F$ are:
\begin{ques}\label{q2}
Suppose $I_q^n F \neq 0$ and $I_q^n F$ is inseparably 2-linked. Does it follow that $I_q^n F$ is inseparably $m$-linked for every finite $m \geq 2$?
\end{ques}
\begin{ques}\label{q3}
Suppose $I_q^n \neq 0$ and $I_q^n F$ is separably 3-linked. Does it follow that $I_q^n F$ is separable $m$-linked for every finite $m\geq 3$?
\end{ques}

We answer Question \ref{q2} in the negative, and also Question \ref{q3} for $n \geq 3$.
We conjecture that the answer to \cite[Question 5.2]{Becher} is negative also when $\operatorname{char}(F) \neq 2$.

\section{Preliminaries}

For general reference on symmetric bilinear forms and quadratic forms see \cite{EKM}. 
The group $W_q F=I_q F$ is generated by the forms $\varphi(u,v)=\alpha u^2+u v+\beta v^2$ for $\alpha,\beta \in F$, denoted by $[\alpha,\beta]$. We write $\langle \beta_1,\dots,\beta_n \rangle_b$ for the diagonal bilinear form $$B((v_1,\dots,v_n),(w_1,\dots,w_n))=\sum_{i=1}^n \beta_i v_i w_i$$ and $\langle \beta_1,\dots,\beta_n \rangle$ for the diagonal quadratic form $\varphi(v_1,\dots,v_n)=\sum_{i=1}^n \beta_i v_i^2$.
We denote by $D(\varphi)$ the set of nonzero values $\varphi$ represents, i.e. $\{\varphi(v) :  v \in V, \varphi(v) \neq 0\}$, and by $D(B)$ the set $\{B(v,v) : v \in V, B(v,v) \neq 0\}$. 

The bilinear forms $\Pf{\beta}_b = \Qf{1,\beta}_b$ are called bilinear $1$-fold Pfister forms. These forms generate the basic ideal $IF$ of $WF$. Powers of $IF$ are denoted by $I^nF$. The tensor products $\Pf{\beta_1,\dots,\beta_n}_b = \Pf{\beta_1}_b \tensor \cdots \tensor \Pf{\beta_n}_b$ are called bilinear $n$-fold Pfister forms. 

The quadratic form $[1,\alpha]$ is called a quadratic $1$-fold Pfister form, and denoted by $\MPf{\alpha}$. 
For any quadratic form $\varphi$ and $\beta_1,\dots,\beta_n \in F^\times$, $\Qf{\beta_1,\dots,\beta_n}_b \otimes \varphi = \beta_1 \varphi \perp \dots \perp \beta_n \varphi$.
For any integer $n \geq 2$, we define the quadratic $n$-fold Pfister form $\MPf{ \beta_1,\dots,\beta_{n-1},\alpha}$ as $\Pf{\beta_{1},\dots,\beta_{n-1}}_b \otimes \MPf{\alpha}$.
A quadratic Pfister form is isotropic if and only if it is hyperbolic, and a bilinear Pfister form is isotropic if and only if it is metabolic.
We define $I_q^n F$ to be group generated by the scalar multiples of quadratic $n$-fold Pfister forms.

A quadratic $n$-fold Pfister form $\varphi=\langle \langle \beta_1,\dots,\beta_{n-1},\alpha]]$ over $F$ decomposes as $\varphi=[1,\alpha] \perp \varphi''$.
The quadratic form $\varphi'=\langle 1 \rangle \perp \varphi''$ is independent of the choice of presentation of $\varphi$, and is called the ``pure part" of $\varphi$.
A bilinear form $B=\langle \langle \beta_1,\dots,\beta_n \rangle \rangle_b$ over $F$ decomposes as $B=\langle 1 \rangle_b \perp B'$ for a unique symmetric bilinear form $B'$ called the ``pure part" of $B$.

\section{Bilinear Pfister Forms}\label{Bilinear}

\sloppy Suppose $\operatorname{char}(F)=2$.
We define the 2-rank of $F$ (denoted $\operatorname{rank}_2(F)$) to be $\log_2([F:F^2])$. It is known to be an integer. For any finitely generated field extension $L/F$, $\operatorname{rank}_2(L)=\operatorname{rank}_2(F)+\operatorname{tr.deg}(L/F)$ (see \cite[Lemma 2.7.2]{FriedJarden}).
By \cite[Example 6.5]{EKM}, a given bilinear $n$-fold Pfister form $\langle \langle \beta_1,\dots,\beta_n \rangle \rangle_b$ is anisotropic if and only if $\log_2([F^2(\beta_1,\dots,\beta_n):F^2])=n$. As a result, if $\operatorname{rank}_2(F)=r$ then $I^n F \neq 0$ for all $n \leq r$ and $I^n F=0$ for all $n > r$.

\begin{thm}\label{sharing}
Let $F$ be a field of $\operatorname{char}(F)=2$ and $\operatorname{rank}_2(F)=n$ for some integer $n \geq 2$.
Then for any $m \in \{1,\dots,n-1\}$, every collection of $2^{n-m+1}-1$ anisotropic bilinear $n$-fold Pfister forms have a bilinear $m$-fold Pfister form as a common factor.
\end{thm}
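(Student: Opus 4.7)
The plan is to induct on $n - m \geq 1$, treating $m = n - 1$ as the base case and descending $m$ in the inductive step.

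For the base case ($m = n - 1$), I would prove that any three anisotropic bilinear $n$-fold Pfister forms share a common $(n-1)$-fold factor. The hypothesis $\operatorname{rank}_2(F) = n$ gives $I^{n+1} F = 0$ by the discussion preceding the theorem. Using the identity $\Pf{\alpha}_b + \Pf{\beta}_b \equiv \Pf{\alpha\beta}_b \pmod{I^2 F}$, the Witt sum of two anisotropic $n$-fold Pfister forms in $I^n F$ collapses, modulo $I^{n+1} F = 0$, to a single Pfister form, which yields 2-linkage; 3-linkage then follows by extending the $(n-1)$-factor of a pair to the third form via the same vanishing, perhaps after an appropriate change of slot representation.

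For the inductive step, suppose the result is established for $m+1$, so that every collection of $2^{n-m} - 1$ anisotropic $n$-fold Pfister forms shares an $(m+1)$-fold factor. Given $K = 2^{n-m+1} - 1 = 2(2^{n-m} - 1) + 1$ such forms $\pi_1, \ldots, \pi_K$, I would apply the inductive hypothesis to two overlapping subsets of size $2^{n-m} - 1$ to obtain two $(m+1)$-fold common factors $\psi_A$ and $\psi_B$, then apply a 3-linkage argument to $\psi_A$, $\psi_B$, and a third $(m+1)$-fold factor built from another subset choice, extracting an $m$-fold sub-factor $\psi_0$. By arranging the subsets to cover every $\pi_i$, this $\psi_0$ should be a common $m$-fold factor of the full collection.

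The main obstacle is ensuring that the extracted $m$-fold factor divides every $\pi_i$, not merely those in the two subsets used to construct $\psi_A, \psi_B$, and also that 3-linkage-style arguments can be made to work for the $(m+1)$-fold factors $\psi_A, \psi_B$ (which sit in $I^{m+1}F$, where the vanishing $I^{m+2}F = 0$ is not available). A cleaner alternative worth pursuing is to work directly via the Kato-Milne injection $I^n F \hookrightarrow \Omega^n_F / d\Omega^{n-1}_F$: in our setting $\operatorname{rank}_2(F) = n$ makes $\Omega^n_F$ one-dimensional over $F$, so each Pfister form is encoded as a logarithmic $n$-form $d\beta_1/\beta_1 \wedge \cdots \wedge d\beta_n/\beta_n$, and common $m$-factors translate into common logarithmic divisors modulo exact differentials, a condition that can be analyzed by linear algebra on decomposable wedges in the space of differential forms.
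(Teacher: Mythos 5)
Your inductive step contains a gap that you flag yourself but do not close, and it is the crux of the matter. From subsets of size $2^{n-m}-1$ you obtain $(m+1)$-fold common factors $\psi_A,\psi_B,\dots$, and you then need these to share an $m$-fold factor. But $\psi_A$ and $\psi_B$ are $(m+1)$-fold Pfister forms over a field of $2$-rank $n$, and for $m\leq n-2$ (i.e.\ in every case except the base case) there is no linkage available at that level: two $(m+1)$-fold Pfister factors of one and the same anisotropic $n$-fold form correspond to subfields of degree $2^{m+1}$ over $F^2$ inside the degree-$2^n$ subfield attached to the ambient form, and the dimension bound for their intersection, $2^{m+2}-2^n$, is $\leq 0$ precisely when $m\leq n-2$; the intersection can in fact reduce to $F^2$ (for $n=3$, $m=1$, compare the subfields $F^2(\alpha_1,\alpha_2)$ and $F^2(\alpha_1+\alpha_3,\alpha_2+\alpha_1\alpha_3)$ inside $F^2(\alpha_1,\alpha_2,\alpha_3)$, which meet only in $F^2$). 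So no ``3-linkage-style argument'' applies to $\psi_A,\psi_B$, as you suspected. There is also a counting problem: two subsets of size $2^{n-m}-1$ cover at most $2^{n-m+1}-2$ of the $2^{n-m+1}-1$ forms, so you need at least three subsets and hence simultaneous linkage of three $(m+1)$-fold forms, which is even further out of reach. Your base case is likewise only a sketch: $I^{n+1}F=0$ does not by itself deliver a common $(n-1)$-fold factor of two, let alone three, $n$-fold forms without further work, and the 3-linkage step is deferred to ``an appropriate change of slot representation''. The Kato--Milne alternative is not developed enough to assess; note in any case that divisibility of Pfister forms is not readily visible on classes in $I^nF/I^{n+1}F$.

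The paper's proof sidesteps all of this by never splitting the collection and never descending to lower-fold forms in isolation: it keeps all $N=2^{n-m+1}-1$ ambient forms in play and builds the common factor one slot at a time from the bottom up. Given a common $i$-fold factor $\rho$ with $B_\ell=\rho\otimes\pi_\ell$, each value set $D(\rho\otimes\pi_\ell')$ is an $F^2$-subspace of codimension $2^i$ in the $2^n$-dimensional $F^2$-space $F$; since $N\cdot 2^i\leq 2^n-2^i<2^n$, all $N$ of these subspaces meet nontrivially, and any nonzero $\beta$ in the intersection extends $\rho$ to a common $(i+1)$-fold factor of all $N$ forms by \cite[Proposition 6.15]{EKM}. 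The inequality holds for every $i\leq m-1$ exactly because $N=2^{n-m+1}-1$, which is where the bound in the statement comes from. To salvage your plan you would have to reorganize the induction so that the object carried along is a factor of \emph{all} $N$ forms at every stage --- which is essentially the paper's argument.
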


\begin{proof}
Write $N=2^{n-m+1}-1$. Consider $N$ anisotropic bilinear $n$-fold Pfister forms $B_1,\dots,B_N$.
Let $i$ be an integer in $\{0,\dots,m-1\}$.
Suppose there exists a bilinear $i$-fold Pfister form $\rho$ such that $B_\ell=\rho \otimes \pi_\ell$ for some $(n-i)$-fold Pfister forms $\pi_\ell$ for all $\ell \in \{1,\dots,N\}$.
For each $\ell$, $D(\rho \otimes \pi_\ell')$ is an $F^2$-vector subspace of $F$ of dimension $2^n-2^i$.
Since $2^i N \leq 2^n-2^i<2^n$, the spaces $D(\rho \otimes \pi_\ell')$ for $\ell \in \{1,\dots,N\}$ have a nontrivial intersection.
Hence, by \cite[Proposition 6.15]{EKM}, there exists $\beta \in F^\times$ such that $B_\ell=\rho \otimes \langle \langle \beta \rangle \rangle_b \otimes \psi_\ell$ for some bilinear $(n-i-1)$-fold Pfister forms $\psi_\ell$ for all $\ell \in \{1,\dots,N\}$. The statement then follows by induction.
\end{proof}

\begin{cor}
Let $F$ be a field of $\operatorname{char}(F)=2$ with $I^n F \neq 0$ for some $n \geq 2$. Then $I^n F$ is 3-linked if and only if $\operatorname{rank}_2(F)=n$.
\end{cor}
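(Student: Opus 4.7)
I plan to treat the two implications separately. The forward direction is immediate from \Tref{sharing}: setting $m=n-1$ gives $2^{n-(n-1)+1}-1=3$, so when $\operatorname{rank}_2(F)=n$, every three anisotropic bilinear $n$-fold Pfister forms share a bilinear $(n-1)$-fold Pfister factor, which is precisely the definition of $3$-linkage.

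For the converse, the hypothesis $I^n F\neq 0$ forces $\operatorname{rank}_2(F)\geq n$ by the remark preceding \Tref{sharing}, so I argue by contradiction, starting from $\operatorname{rank}_2(F)\geq n+1$. A $p$-basis of $F$ over $F^2$ yields elements $\beta_1,\dots,\beta_{n+1}\in F$ with $[F^2(\beta_1,\dots,\beta_{n+1}):F^2]=2^{n+1}$, so that the $2^{n+1}$ squarefree monomials $\prod_{i\in S}\beta_i$ form an $F^2$-basis of the generated field. I then plan to exhibit three anisotropic bilinear $n$-fold Pfister forms having no common bilinear $(n-1)$-fold Pfister factor.

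The three forms $B_1,B_2,B_3$ are obtained by removing, in turn, each of the slots $\beta_{n-1}$, $\beta_n$, $\beta_{n+1}$ from $\Pf{\beta_1,\dots,\beta_{n+1}}_b$. Each is anisotropic by the Example~6.5 criterion recalled before \Tref{sharing}, since the $n$ remaining slots still generate an $F^2$-subfield of degree $2^n$. A common (necessarily anisotropic) factor $\rho=\Pf{\gamma_1,\dots,\gamma_{n-1}}_b$ would, via the identification of $D(\rho)\cup\{0\}$ with the slot-field $F^2(\gamma_1,\dots,\gamma_{n-1})$, embed this $2^{n-1}$-dimensional $F^2$-subspace into the intersection of the three slot-fields of the $B_i$. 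A routine coefficient computation in the $F^2$-basis $\{\prod_{i\in S}\beta_i\}$ identifies this triple intersection as $F^2(\beta_1,\dots,\beta_{n-2})$, of $F^2$-dimension only $2^{n-2}$, yielding the required contradiction.

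The main obstacle is the last identification of $D(\rho)\cup\{0\}$ with the slot-field; however, this is essentially the $F^2$-vector space fact already exploited inside the proof of \Tref{sharing}, so no tool beyond the preliminaries and \Tref{sharing} itself should be required.
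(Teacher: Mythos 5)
Your proof is correct, and while your forward direction coincides with the paper's (it is exactly \Tref{sharing} with $m=n-1$, giving $N=3$), your converse takes a genuinely different route. The paper deduces $\operatorname{rank}_2(F)\leq n$ from Becher's theorem that $3$-linkage of $I^nF$ forces $I^{n+1}F=0$ (\cite[Theorem 5.1]{Becher}), together with the criterion from \cite[Example 6.5]{EKM} recalled at the start of \Sref{Bilinear}; combined with $I^nF\neq 0$, which gives $\operatorname{rank}_2(F)\geq n$, this yields equality. You replace that external input by an explicit counterexample: if $[F^2(\beta_1,\dots,\beta_{n+1}):F^2]=2^{n+1}$, the three anisotropic $n$-fold forms obtained by deleting $\beta_{n-1}$, $\beta_n$, $\beta_{n+1}$ in turn from $\Pf{\beta_1,\dots,\beta_{n+1}}_b$ have value sets (with $0$ adjoined) equal to the $F^2$-spans of the squarefree monomials in their respective slot sets, so a common anisotropic $(n-1)$-fold factor $\rho$ would force the $2^{n-1}$-dimensional space $D(\rho)\cup\{0\}=F^2(\gamma_1,\dots,\gamma_{n-1})$ into the triple intersection $F^2(\beta_1,\dots,\beta_{n-2})$, which has dimension only $2^{n-2}$ --- a contradiction, valid for all $n\geq 2$. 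Every ingredient you use (anisotropy of factors of anisotropic forms, value sets of anisotropic bilinear Pfister forms as $F^2$-spans, intersections of spans of subsets of a common monomial basis) is already exploited in the paper's proofs of \Tref{sharing} and \Tref{MainB}, so your argument is sound and makes the corollary self-contained, independent of \cite[Theorem 5.1]{Becher}; what you forgo is the stronger intermediate conclusion $I^{n+1}F=0$ and the characteristic-free scope of Becher's result, neither of which is needed here.
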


\begin{proof}
Suppose every three anisotropic bilinear $n$-fold Pfister forms over $F$ have a common $(n-1)$-fold Pfister factor. By \cite[Theorem 5.1]{Becher}, $I^{n+1} F=0$, and therefore $\operatorname{rank}_2(F) \leq 2^n$. Since $k_n(F) \neq 0$, $\operatorname{rank}_2(F)=n$.
The opposite direction is Theorem \ref{sharing} with $m=n-1$.
\end{proof}

If we plug in $m=n-1$ in Theorem \ref{sharing}, then it says that when $\operatorname{rank}_2(F)=n \geq 2$, every $2^n-1$ anisotropic bilinear $n$-fold Pfister forms have a common bilinear 1-fold Pfister factor, i.e. a common slot.
The following theorem shows that this bound is sharp by providing $2^n$ bilinear $n$-fold Pfister forms that do not have a common slot.

\begin{thm}\label{MainB}
Let $F$ be a field of $\operatorname{char}(F)=2$ with $\operatorname{rank}_2(F)=n$ for some $n \geq 2$.
Then there exist $2^n$ anisotropic bilinear $n$-fold Pfister forms with no common slot.
\end{thm}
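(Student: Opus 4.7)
The plan is to exhibit the $2^n$ Pfister forms explicitly as $0/1$-perturbations of a fixed $2$-basis, then verify by linear algebra over $F^2$ that the represented values of their pure parts intersect only in $\{0\}$. Let $\beta_1,\dots,\beta_n$ be a $2$-basis of $F$ over $F^2$. For each subset $E\subseteq\{1,\dots,n\}$, set $e_i=1$ if $i\in E$ and $e_i=0$ otherwise, and define
\[
B_E=\langle\langle \beta_1+e_1,\dots,\beta_n+e_n\rangle\rangle_b.
\]
Since $e_i\in F^2$, the elements $\beta_i+e_i$ again form a $2$-basis of $F$, so each $B_E$ is anisotropic by the criterion recalled at the start of Section \ref{Bilinear}.

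Write $\gamma_i^E=\beta_i+e_i$. Since $\{\gamma_i^E\}_{i=1}^n$ is again a $2$-basis, the $2^n$ products $\gamma^{E,S}:=\prod_{i\in S}\gamma_i^E$ for $S\subseteq\{1,\dots,n\}$ form an $F^2$-basis of $F$, and $V_E:=D(B_E')\cup\{0\}$ is exactly the $F^2$-span of those $\gamma^{E,S}$ with $S\ne\emptyset$. Thus $V_E=\ker\lambda_E$ for the $F^2$-linear functional $\lambda_E\colon F\to F^2$ that extracts the coefficient of $\gamma^{E,\emptyset}=1$. By \cite[Proposition~6.15]{EKM}, a common slot of all the $B_E$'s would be a nonzero element of $\bigcap_E V_E$, so it will suffice to prove $\bigcap_E V_E=\{0\}$.

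The decisive step is the computation of $\lambda_E$ in the original $\beta$-basis. Substituting $\beta_i=\gamma_i^E+e_i$ and expanding $\beta^T:=\prod_{i\in T}\beta_i$ in the $\gamma^E$-basis shows that the constant-term coefficient equals $\prod_{i\in T}e_i$, giving
\[
\lambda_E(\beta^T)=\prod_{i\in T}e_i=\begin{cases}1 & \text{if }T\subseteq E,\\ 0 & \text{otherwise.}\end{cases}
\]
If $x=\sum_T c_T^2\beta^T$ lies in every $V_E$, then $\sum_{T\subseteq E}c_T^2=0$ for every $E\subseteq\{1,\dots,n\}$. Induction on $|E|$, starting with $E=\emptyset$ (which forces $c_\emptyset=0$), yields $c_T=0$ for every $T$, hence $x=0$. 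Therefore $\bigcap_E V_E=\{0\}$ and no common slot exists; combined with Theorem \ref{sharing} for $m=1$, the $B_E$ must automatically be pairwise non-isomorphic.

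The main obstacle is finding a construction that delivers this trivial-intersection property. The $0/1$ shifts are the right choice because they convert the incidence relation $\lambda_E(\beta^T)=[T\subseteq E]$ into the (triangular, hence invertible) M\"obius matrix of the Boolean lattice on $\{1,\dots,n\}$; any less symmetric choice would obscure this linear-independence structure.
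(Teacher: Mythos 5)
Your proof is correct. It follows the same overall strategy as the paper---realise $D(B')\cup\{0\}$ for each form as an $F^2$-hyperplane of $F$ and show that the $2^n$ hyperplanes so obtained intersect trivially, so that no scalar can be a slot of all the forms---but the explicit family and the linear algebra verifying the trivial intersection are genuinely different. The paper keeps one reference form $\langle\langle \alpha_1,\dots,\alpha_n\rangle\rangle_b$ and, for each nonzero $\mathbf{d}\in\{0,1\}^n$, replaces a single slot by $1+\alpha^{\mathbf d}$; intersecting each perturbed hyperplane with the reference one deletes exactly the basis vector $\alpha^{\mathbf d}$, and the trivial intersection is read off coordinate by coordinate. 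You instead translate each generator of the $2$-basis by $0$ or $1$, so that every $V_E$ becomes the kernel of the coefficient-of-$1$ functional in the shifted monomial basis, and the trivial intersection amounts to the invertibility over $\mathbb{F}_2$ of the zeta matrix $[T\subseteq E]$ of the Boolean lattice, settled by your induction on $|E|$. Your construction is more symmetric (no distinguished form, no choice of a minimal index $\ell$), at the price of the M\"obius-type computation; the paper's makes each pairwise intersection with $B_{\mathbf 0}$ completely transparent. Both arguments reduce anisotropy to the $2$-basis criterion of \cite[Example 6.5]{EKM} and both rest on the fact that a slot of $B$ must be represented by the pure part $B'$. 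Your closing observation that the $B_E$ are forced to be pairwise non-isomorphic by Theorem \ref{sharing} with $m=1$ is correct and a harmless bonus.
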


\begin{proof}
Let $\alpha_1,\dots,\alpha_n$ be a 2-basis of $F$ (i.e. $F=F^2(\alpha_1,\dots,\alpha_n)$). Write $I= \{0,1\}^{\times n}$, ${\bf 0}=(0,\dots,0)$, ${\bf d}=(d_1,\dots,d_n)$ for an arbitrary element in $I$, and ${\bf \alpha^d}$ for $\prod_{i=1}^n \alpha_i^{d_i}$. For every ${\bf d} \in I \setminus \{{\bf 0}\}$, let $B_{\bf d}$ be $\langle \langle \alpha_1,\dots,\widehat{\alpha_\ell},\dots,\alpha_n \rangle \rangle_b \otimes \langle \langle 1+{\bf \alpha^d}\rangle\rangle_b$ where $\ell$ is the minimal integer in $\{1,\dots,n\}$ for which $d_\ell \neq 0$.
For every ${\bf e} \in I \setminus \{{\bf 0}\}$ with $e_\ell=0$, both ${\bf \alpha^e}$ and ${\bf \alpha^e}(1+{\bf \alpha^d})={\bf \alpha^e+\alpha^{e+d}}$ are in $D(B_{\bf d}')$, and so also ${\bf \alpha^{e+d}} \in D(B_{\bf d}')$.
Therefore, the elements $\{{\bf \alpha^e}: {\bf e} \in I \setminus \{{\bf 0,d}\}\} \cup \{1+{\bf \alpha^d}\}$ are all in $D(B_{\bf d}')$, and since they are linearly independent over $F^2$ and $D(B_{\bf d}')$ is of dimension $2^n-1$ over $F^2$, they form a basis of $D(B_{\bf d}')$ over $F^2$.

Let $B_{\bf 0}$ be $\langle \langle \alpha_1,\dots,\alpha_n \rangle \rangle_b$, and so $D(B_{\bf 0}')$ is spanned over $F^2$ by $\{\alpha^{\bf e} : {\bf e} \in I \setminus \{{\bf 0}\}\}$. Since $D(B_d)$ for all $d \in I$ are of dimension $2^n$ over $F^2$, they are anisotropic by \cite[Example 6.5]{EKM}. By elementary linear algebra, for any given ${\bf d} \in I \setminus \{ {\bf 0} \}$, the intersection $D(B_{\bf 0}') \bigcap D(B_{\bf d}')$ is spanned by $\{\alpha^{\bf e} : {\bf e} \in I \setminus \{{\bf 0,d}\}\}$, and so the intersection $\bigcap_{{\bf d} \in I} D(B_{\bf d}')$ is trivial.
This means the pure parts of the bilinear $n$-fold Pfister forms $\{ B_{\bf d} : {\bf d} \in I\}$ do not represent a common element, and so they have no slot in common.
\end{proof}

This means the answer to Question \ref{q1} is always negative.
Fields of 2-rank $n$ are easily provided: take any perfect field $F_0$ of $\operatorname{char}(F)=2$, and let $F$ be either the function field $F_0(\alpha_1,\dots,\alpha_n)$ in $n$ algebraically independent variables over $F_0$, or the field of iterated Laurent series $F_0((\alpha_1))\dots((\alpha_n))$ in $n$ variables over $F_0$.

The situation in quadratic forms is more complicated, as we shall see in the next section.
It is a good opportunity to point out another surprising difference between quadratic forms and symmetric bilinear forms in characteristic 2:

\begin{prop}
Suppose two given anisotropic bilinear $n$-fold Pfister forms $B_1$ and $B_2$ over $F$ with $\operatorname{char}(F)=2$ satisfy the following: for every $\alpha \in F^\times$, $\langle \langle \alpha \rangle \rangle$ is a factor of $B_1$ if and only if it is a factor of $B_2$. Then $B_1 \simeq B_2$.
\end{prop}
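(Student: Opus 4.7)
The plan is to rephrase the hypothesis as an equality of pure-part value sets and then induct on $n$, using the round property of bilinear Pfister forms. By \cite[Proposition 6.15]{EKM}---the same tool invoked in the proof of \Tref{sharing}---an element $\alpha \in F^\times$ satisfies $\langle\langle\alpha\rangle\rangle_b \mid B_i$ if and only if $\alpha \in D(B_i')$. Thus the hypothesis becomes $D(B_1') = D(B_2') =: S$, an $F^2$-subspace of $F$ of dimension $2^n-1$ meeting $F^2$ only at $0$ by anisotropy. Consequently $D(B_i) \cup \{0\} = F^2 \oplus (S \cup \{0\})$ is a common subfield $K \subseteq F$ with $[K:F^2] = 2^n$.

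I would then induct on $n$. The case $n=1$ is immediate: $D(\langle\langle\beta\rangle\rangle_b') = \beta F^{\times 2}$ determines $\beta$ modulo squares. For $n \geq 2$, fix a presentation $B_1 = \langle\langle\beta_1,\ldots,\beta_n\rangle\rangle_b$. Each product $\beta^I = \prod_{i \in I}\beta_i$ for $\emptyset \neq I \subseteq \{1,\ldots,n\}$ appears as a diagonal entry of $B_1'$, hence lies in $S = D(B_2')$, and is therefore a slot of $B_2$. Since $B_1$ and $B_2$ are anisotropic $n$-fold Pfister forms of equal dimension, it suffices to show that $\langle\langle\beta_1,\ldots,\beta_n\rangle\rangle_b$ divides $B_2$. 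I would argue this by an inner induction on $k$: given $B_2 \simeq \pi_k \otimes \psi_k$ with $\pi_k = \langle\langle\beta_1,\ldots,\beta_k\rangle\rangle_b$ and $\psi_k$ an anisotropic $(n-k)$-fold Pfister form, use the slot conditions on $\beta_{k+1}$ and on every product $\beta_{k+1}\beta^J$ for $J \subseteq \{1,\ldots,k\}$, together with the roundness of $\pi_k$, to produce a factorization at level $k+1$ in which $\beta_{k+1}$ becomes a slot of $\psi_k$.

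The main obstacle is this inner step. The decomposition $\pi_k \otimes \psi_k \simeq B_2$ is not rigid---two candidate choices of $\psi_k$ can differ by a scaling of the ``new slot'' by an element of $D(\pi_k)$---and knowing only that $\beta_{k+1}$ is a slot of $B_2$ does not force it to be a slot of any particular $\psi_k$, since it may appear ``twisted'' by a nontrivial component along $\pi_k$. The joint hypothesis on the products $\beta_{k+1}\beta^J$ is what should rule out such twisted occurrences, and roundness then permits the scalar adjustment needed to untwist. This is precisely the step where the finer invariant $D(B')$ is decisive: the coarser data $D(B) = K \setminus \{0\}$ by itself does not determine the Pfister form, since different 2-bases of the same subfield $K$ can yield non-isomorphic Pfister forms with genuinely different pure parts.
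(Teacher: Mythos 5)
Your first reduction---translating the hypothesis into $D(B_1')=D(B_2')$ via \cite[Proposition 6.15]{EKM}---is exactly the paper's opening move. But your inductive step has a genuine gap, and you have yourself located it: fixing a presentation $B_1=\langle\langle\beta_1,\dots,\beta_n\rangle\rangle_b$ in advance and trying to push the slots $\beta_{k+1}$ one at a time into a factorization $B_2\simeq\pi_k\otimes\psi_k$ requires passing from ``$\beta_{k+1}\in D(B_2')=D(\pi_k'\perp\pi_k\otimes\psi_k')$'' to ``$\beta_{k+1}\in D(\pi_k\otimes\psi_k')$,'' which is what \cite[Proposition 6.15]{EKM} actually needs; you only gesture at how the auxiliary conditions on the products $\beta_{k+1}\beta^J$ and roundness would accomplish this ``untwisting.'' As written, the decisive step is asserted, not proved, so the argument is incomplete.

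The paper avoids the obstacle entirely by never committing to the slots of $B_1$. Given a common $i$-fold factor $\rho$ with $B_1=\rho\otimes\psi_1$ and $B_2=\rho\otimes\psi_2$, the sets $D(\rho\otimes\psi_1')$ and $D(\rho\otimes\psi_2')$ are $F^2$-subspaces of dimension $2^n-2^i$ inside the common $(2^n-1)$-dimensional space $V=D(B_1')=D(B_2')$; for $i\leq n-1$ a dimension count forces a nonzero element $\beta$ in their intersection, and \cite[Proposition 6.15]{EKM} then upgrades $\rho$ to the common $(i+1)$-fold factor $\rho\otimes\langle\langle\beta\rangle\rangle_b$ of both forms. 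Choosing each new slot adaptively from this intersection, rather than from a fixed presentation of $B_1$, is precisely what makes the induction close after $n$ steps. To salvage your version you would have to supply the untwisting argument in full, which is considerably more delicate than the dimension count and is not needed.
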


\begin{proof}
For every $\alpha \in F^\times$, $\langle \langle \alpha \rangle \rangle$ is a factor of $B_1$ if and only if $\alpha$ is represented by $B_1'$. 
If for every $\alpha$, $\alpha$ is represented by $B_1'$ if and only if it is represented by $B_2'$, it means that $D(B_1')=D(B_2')$, i.e. $D(B_1')$ and $D(B_2')$ are the same $(2^n-1)$-dimensional $F^2$-subspace $V$ of $F$. Let $\rho$ be a common $i$-fold factor of $B_1$ and $B_2$. Write $B_1=\rho \otimes \psi_1$ and $B_2=\rho \otimes \psi_2$. The spaces $D(\rho \otimes \psi_1')$ and $D(\rho \otimes \psi_2')$ are $(2^n-2^i)$-dimensional $F^2$-subspaces of $V$. If $i \leq n-1$ then they have a nonzero intersection, because $[V:F^2]=2^n-1$. Let $\beta$ be a nonzero element in the intersection. By \cite[Proposition 6.15]{EKM}, $\rho \otimes \langle \langle \beta \rangle \rangle$ is a common factor of $B_1$ and $B_2$. This works for every $i \in \{1,\dots,n-1\}$. Therefore, we obtain by induction that $B_1 \simeq B_2$.
\end{proof}

This is not true for quadratic $n$-fold Pfister forms, which can  share all 1-fold factors (either bilinear or quadratic, or both) without being isomorphic (see \cite{ChapmanDolphinLaghribi:2016} for reference).

\begin{rem}
In this section we focused on anisotropic bilinear $n$-fold Pfister forms.
By \cite[Page 909]{ArasonBaeza:2007} an isotropic bilinear $n$-fold Pfister form $B$ decomposes as $B=\langle \langle \underbrace{1,\dots,1}_{k \ \text{times}} \rangle \rangle_b \otimes B_1$ where $B_1$ is an anisotropic bilinear $(n-k)$-fold Pfister form, and $D(B)=D(B_1)$ for some unique integer $k$.
However, $B_1$ is not unique, and there can certainly exist a different anisotropic bilinear Pfister form $B_2$ such that $B=\langle \langle 1 \rangle \rangle_b^k \otimes B_2$ as well.
For example, take an anisotropic $B_1=\langle \langle x \rangle 
\rangle_b$, $B_2=\langle \langle x+1 \rangle \rangle_b$ and $B=\langle \langle 1,x \rangle \rangle_b$ (see also \cite[Proposition A.8]{ArasonBaeza:2007}). The situation is therefore more fluid when it comes to isotropic forms. In addition, anisotropic bilinear $n$-fold Pfister forms represent nonzero classes in $I^n F$ and are mapped to nonzero classes in the Milnor $K$-groups $K_n F/2 K_n F$ while all the isotropic $n$-fold Pfister forms are trivial in $I^n F$ and mapped to zero by the isomorphism $I^n F/I^{n+1} F \cong K_n F/2 K_n F$ from \cite{Kato:1982}, which gives anisotropic forms greater significance in the algebraic theory of bilinear forms, $K$-theory and in general.
\end{rem}

\section{Quadratic Pfister Forms}\label{Quadratic}

In this section we provide a negative answer to Question \ref{q2}, and to Question \ref{q3} in all cases but $n=2$. The technique is to study the common quadratic inseparable splitting fields of quadratic $n$-fold Pfister forms.
Given an anisotropic quadratic $n$-fold Pfister form $\varphi$ over $F$ and an inseparable quadratic field $K=F[\sqrt{\gamma}]$, $\varphi_K$ is isotropic if and only if the bilinear 1-fold Pfister form $\langle \langle \gamma \rangle \rangle_b$ is a factor of $\varphi$. Given a quadratic form $\varphi : V \rightarrow F$, a subform $\psi$ of $\varphi$ is the restriction of $\varphi$ to some subspace $W$ of $V$.
\begin{lem}\label{Common}
If $F[\sqrt{\gamma}]$ is a splitting field of an anisotropic quadratic $n$-fold Pfister form $\varphi$ over $F$, then $\langle 1,\gamma \rangle$ is a subform of $\varphi'$.
\end{lem}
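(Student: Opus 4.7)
The plan is to exploit the fact recalled immediately before the lemma: for $K = F[\sqrt{\gamma}]$ inseparable quadratic over $F$, the form $\varphi_K$ is isotropic if and only if the bilinear $1$-fold Pfister form $\langle\langle\gamma\rangle\rangle_b$ is a factor of $\varphi$. Since $F[\sqrt{\gamma}]$ is assumed to be a splitting field of $\varphi$ and Pfister forms are isotropic iff hyperbolic, this gives a decomposition $\varphi \simeq \langle\langle\gamma\rangle\rangle_b \otimes \tau$ for some quadratic $(n-1)$-fold Pfister form $\tau$. Equivalently, $\varphi \simeq \tau \perp \gamma\tau$.

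Next I would write $\tau \simeq [1,\alpha] \perp \tau''$ with $\tau' = \langle 1 \rangle \perp \tau''$, so that
\[
\varphi \simeq [1,\alpha] \perp \tau'' \perp \gamma[1,\alpha] \perp \gamma\tau''.
\]
Reading off the first hyperbolic plane $[1,\alpha]$ as the one used to compute the pure part of $\varphi$ (the pure part is independent of the presentation, so this choice is legitimate), I obtain
\[
\varphi' \simeq \langle 1 \rangle \perp \tau'' \perp \gamma[1,\alpha] \perp \gamma\tau''.
\]

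Finally, inside the summand $\gamma[1,\alpha]$ the vector $(1,0)$ has value $\gamma \cdot 1^2 = \gamma$, and this vector is orthogonal to the initial $\langle 1\rangle$ summand since they sit in different orthogonal components of $\varphi'$. Restricting $\varphi'$ to the two-dimensional subspace spanned by the generator of $\langle 1\rangle$ and this vector yields the diagonal form $\langle 1,\gamma\rangle$, showing that $\langle 1,\gamma\rangle$ is a subform of $\varphi'$.

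The only point requiring any care is the invariance of the pure part under the choice of decomposition $\varphi \simeq [1,\alpha]\perp\varphi''$; this was already stated in the preliminaries, so the argument is essentially a bookkeeping exercise once the factorization $\varphi \simeq \langle\langle\gamma\rangle\rangle_b\otimes\tau$ is in hand.
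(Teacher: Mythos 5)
Your argument is correct. The paper itself gives no argument here: its ``proof'' is a one-line citation to Proposition 3.2 of the Chapman--Dolphin--Laghribi paper, so your route is genuinely different in that it is self-contained. You use only facts already recorded in the text: the criterion, stated immediately before the lemma, that $\varphi_K$ is isotropic (equivalently hyperbolic, for a Pfister form) if and only if $\langle\langle\gamma\rangle\rangle_b$ is a factor of $\varphi$; the identity $\langle\langle\gamma\rangle\rangle_b\otimes\tau\simeq\tau\perp\gamma\tau$; and the well-definedness of the pure part up to isometry. The decomposition $\varphi'\simeq\langle 1\rangle\perp\tau''\perp\gamma[1,\alpha]\perp\gamma\tau''$ is legitimate because $\varphi=\langle\langle\gamma,\ldots,\alpha]]$ is a valid presentation, and the vector with value $\gamma$ in the summand $\gamma[1,\alpha]$ is indeed orthogonal to the generator of the $\langle 1\rangle$ summand, so the restriction to the plane they span is the totally singular form $\langle 1,\gamma\rangle$. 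Two cosmetic remarks: $[1,\alpha]$ is a nonsingular binary form, not literally a hyperbolic plane unless $\alpha\in\wp(F)$, so the phrase ``first hyperbolic plane'' should be ``first nonsingular binary summand''; and one could shorten the endgame by noting that $\gamma\in D(\gamma\tau)\subseteq D(\varphi'')$ directly, since $\varphi''=\tau''\perp\gamma\tau$ already contains a vector of value $\gamma$ orthogonal to the distinguished vector of value $1$. What your approach buys is independence from the external reference; what the citation buys the author is brevity and the (slightly stronger) statement in the cited proposition, which is not needed here.
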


\begin{proof}
Follows from \cite[Proposition 3.2]{ChapmanDolphinLaghribi:2016}.
\end{proof}

We focus on valued fields with a sufficiently large value group. For general reference on valuation theory see \cite{TignolWadsworth:2015}. 

\begin{lem}[{\cite[Lemma 10.1]{ChapmanGilatVishne:2017}}]\label{CGV}
Let $n\geq 2$ and $F$ be a field of $\operatorname{char}(F)=2$ with a valuation $\mathfrak{v}$ onto the totally ordered group $\Gamma$.
Write $\overline{\mathfrak{v}}$ for the function mapping each $q \in F^\times$ to the class of $q$ in $\Gamma/2\Gamma$. Let $\alpha_1,\dots,\alpha_n$ be elements in $F^\times$ of negative values whose images under $\overline{\mathfrak{v}}$ are linearly independent over $\mathbb{F}_2$, and consider the quadratic $n$-fold Pfister form $\varphi=\langle \langle \alpha_1,\dots,\alpha_n ]]$ with underlying vector space $V$ with basis $\{v_{\bf d} : {\bf d} \in I\}$, where $I$, ${\bf d}$ and ${\bf\alpha^d}$ be the same as in Theorem \ref{MainB}.
Then 
\begin{itemize}
\item[(a)] For every $v=\sum_{{\bf d} \in I} c_{\bf d} v_{\bf d} \in V$, $\mathfrak{v}(\varphi(v))=\mathfrak{v}(\varphi(c_{\bf d} v_{\bf d}))$ for some specific ${\bf d} \in I$ with $c_{\bf d} \ \neq 0$.
\item[(b)] $\varphi$ is anisotropic.
\item[(c)] $\overline{\mathfrak{v}}(D(\varphi))$ is the $\mathbb{F}_2$-subspace of $\Gamma/2\Gamma$ spanned by $\{\overline{\mathfrak{v}}(\alpha_i) : i \in \{1,\dots,n\}\}$.
\end{itemize}
\end{lem}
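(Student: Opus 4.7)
The plan is to prove (a) by induction on $n$, from which (b) and (c) will follow at once. The engine throughout is \emph{ultrametric dominance}: a sum in $F$ whose summands lie in distinct cosets of $2\Gamma$ under $\overline{\mathfrak{v}}$ has valuation equal to the (uniquely attained) minimum of the summand valuations.

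For the base case $n=1$, the form $\varphi=[1,\alpha_1]$ evaluated at $c_0 v_0+c_1 v_1$ is $c_0^2+c_0 c_1+\alpha_1 c_1^2$. If $c_1=0$ the conclusion is immediate; otherwise, factoring as $c_1^2(y^2+y+\alpha_1)$ with $y=c_0/c_1$ and using $\mathfrak{v}(\alpha_1)<0$ together with $\overline{\mathfrak{v}}(\alpha_1)\neq 0$ forces $\mathfrak{v}(y^2+y+\alpha_1)\in\{2\mathfrak{v}(y),\mathfrak{v}(\alpha_1)\}$; in either case $\mathfrak{v}(\varphi(v))$ agrees with $\mathfrak{v}(\varphi(c_{\bf d} v_{\bf d}))$ for an appropriate ${\bf d}$.

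For the inductive step, decompose $\varphi=\langle\langle\alpha_1\rangle\rangle_b\otimes\varphi_1$, where $\varphi_1=\langle\langle\alpha_2,\dots,\alpha_n]]$ inherits the lemma's hypotheses at level $n-1$, and split $v=v^{(0)}+v^{(1)}$ along the orthogonal decomposition $\varphi=\varphi_1\perp\alpha_1\varphi_1$, so that $\varphi(v)=\varphi_1(v^{(0)})+\alpha_1\varphi_1(v^{(1)})$ with no cross term. The inductive hypothesis places each $\overline{\mathfrak{v}}(\varphi_1(v^{(i)}))$ in the $\mathbb{F}_2$-span of $\overline{\mathfrak{v}}(\alpha_2),\dots,\overline{\mathfrak{v}}(\alpha_n)$; the shift by $\overline{\mathfrak{v}}(\alpha_1)$ then puts $\overline{\mathfrak{v}}(\alpha_1\varphi_1(v^{(1)}))$ into a different coset by the linear-independence hypothesis. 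Ultrametric dominance pins $\mathfrak{v}(\varphi(v))$ to the smaller of the two valuations, and applying the inductive hypothesis to the dominating $v^{(i)}$ produces the required index ${\bf d}\in I$. Degenerate cases ($v^{(0)}=0$ or $v^{(1)}=0$) are handled by applying the inductive hypothesis directly to the nonzero part.

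Parts (b) and (c) then come essentially for free. If $v\neq 0$ then some $c_{\bf d}\neq 0$, and (a) gives $\mathfrak{v}(\varphi(v))=\mathfrak{v}(c_{\bf d}^2\alpha^{\bf d})\in\Gamma$, so $\varphi(v)\neq 0$. For (c), (a) places $\overline{\mathfrak{v}}(D(\varphi))$ inside the asserted span, and the reverse inclusion is witnessed by $\varphi(v_{\bf d})=\alpha^{\bf d}$ as ${\bf d}$ ranges over $I$. I expect the main obstacle to lie in the base case rather than the inductive step: the polar cross term $c_0 c_1$ shares its coset of $2\Gamma$ with $c_0^2$, so a naive coset-separation argument fails and the $y^2+y+\alpha_1$ rewriting, together with the sign of $\mathfrak{v}(\alpha_1)$, is what resolves it. The inductive step is comparatively routine, relying only on the fact that multiplying by $\alpha_1$ shifts the ambient coset.
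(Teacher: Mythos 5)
This lemma is not proved in the paper at all: it is imported verbatim, with its label, from \cite[Lemma 10.1]{ChapmanGilatVishne:2017}, so there is no in-paper argument to compare against. Your proposal is a correct, self-contained proof, and it follows the natural route one would expect the cited source to take: induct on $n$ via the orthogonal splitting $\varphi=\varphi_1\perp\alpha_1\varphi_1$ with $\varphi_1=\langle\langle\alpha_2,\dots,\alpha_n]]$, and use that the two summands land in disjoint cosets of $2\Gamma$ (by the linear independence of the $\overline{\mathfrak{v}}(\alpha_i)$), so the ultrametric inequality is an equality. You correctly isolate the only genuinely delicate point, namely the base form $[1,\alpha_1]$, where the polar term $c_0c_1$ prevents a pure coset-separation argument; your case analysis on $\mathfrak{v}(y)$ for $y=c_0/c_1$ (if $\mathfrak{v}(y)\geq 0$ the term $\alpha_1$ strictly dominates since $\mathfrak{v}(\alpha_1)<0$; if $\mathfrak{v}(y)<0$ then $2\mathfrak{v}(y)<\mathfrak{v}(y)$ and $2\mathfrak{v}(y)\neq\mathfrak{v}(\alpha_1)$ modulo $2\Gamma$, so the minimum of the three valuations is uniquely attained) does close this case completely. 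Parts (b) and (c) then follow as you say, since (a) exhibits $\varphi(v)$ as having a finite valuation congruent to $\overline{\mathfrak{v}}(\alpha^{\bf d})$, and the basis vectors $v_{\bf d}$ realize every element of the span. The only cosmetic caveat is that the lemma is stated for $n\geq 2$ while your induction bottoms out at $n=1$; since the $1$-fold statement holds under the same hypotheses, this is harmless.
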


\begin{cor}[{\cite[Corollary 10.2]{ChapmanGilatVishne:2017}}]\label{Contr}
For any two dimensional subspace $U$ of $V$, there exists an element in $U$ whose image under $\overline{\mathfrak{v}}$ is nonzero.
\end{cor}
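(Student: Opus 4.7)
The plan is to reformulate the statement: by \cite[Lemma 10.1(a)]{ChapmanGilatVishne:2017}, for any nonzero $v=\sum_{{\bf d}\in I} c_{\bf d} v_{\bf d}$, there is a witness ${\bf d}$ with $c_{\bf d}\neq 0$ satisfying
\[
\mathfrak{v}(\varphi(v)) \;=\; 2\mathfrak{v}(c_{\bf d}) + \mathfrak{v}(\varphi(v_{\bf d})) \;=\; 2\mathfrak{v}(c_{\bf d}) + \sum_i d_i\,\mathfrak{v}(\alpha_i),
\]
since the Pfister basis satisfies $\varphi(v_{\bf d})={\bf\alpha^d}$. Reducing modulo $2\Gamma$ gives $\overline{\mathfrak{v}}(\varphi(v)) = \sum_i d_i\,\overline{\mathfrak{v}}(\alpha_i)$ in $\Gamma/2\Gamma$, which by the assumed $\mathbb{F}_2$-linear independence of the $\overline{\mathfrak{v}}(\alpha_i)$ vanishes if and only if ${\bf d}={\bf 0}$. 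Thus the goal reduces to finding a nonzero $v\in U$ whose $v_{\bf 0}$-coordinate $c_{\bf 0}$ is zero: for such a $v$, the witness ${\bf d}$ produced by Lemma \ref{CGV}(a) must be different from ${\bf 0}$, and therefore $\overline{\mathfrak{v}}(\varphi(v))\neq 0$.

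To produce such a vector, I would pick any basis $u,w$ of $U$ and write $u=\sum_{\bf d} c_{\bf d} v_{\bf d}$, $w=\sum_{\bf d} e_{\bf d} v_{\bf d}$. If $c_{\bf 0}=0$ take $v=u$; if $e_{\bf 0}=0$ take $v=w$; otherwise both $c_{\bf 0},e_{\bf 0}$ are nonzero and the vector
\[
v \;:=\; e_{\bf 0}\,u - c_{\bf 0}\,w
\]
lies in $U$, has $v_{\bf 0}$-coordinate $e_{\bf 0}c_{\bf 0}-c_{\bf 0}e_{\bf 0}=0$, and is nonzero because $u,w$ are linearly independent and $e_{\bf 0}\neq 0$. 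In any case $v\in U$ is nonzero with vanishing $v_{\bf 0}$-coordinate, completing the argument.

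There is no substantial obstacle here beyond bookkeeping: the content is carried by Lemma \ref{CGV}(a), and the only genuine input is the observation that a $2$-dimensional subspace of a vector space of dimension $2^n\geq 4$ always contains a nonzero vector missing any fixed coordinate. The dimension hypothesis on $U$ is used exactly at this step.
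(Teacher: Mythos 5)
Your argument is correct and is essentially the intended one: the paper offers no proof of this corollary, quoting it directly from \cite[Corollary 10.2]{ChapmanGilatVishne:2017}, and the standard derivation is exactly what you wrote --- Lemma~\ref{CGV}(a) reduces $\overline{\mathfrak{v}}(\varphi(v))$ to $\sum_i d_i\,\overline{\mathfrak{v}}(\alpha_i)$ for a witness ${\bf d}$ with $c_{\bf d}\neq 0$, which is nonzero unless ${\bf d}={\bf 0}$, and any two-dimensional subspace contains a nonzero vector whose $v_{\bf 0}$-coordinate vanishes. The only point worth making explicit is that anisotropy (Lemma~\ref{CGV}(b)) guarantees $\varphi(v)\neq 0$, so that $\overline{\mathfrak{v}}(\varphi(v))$ is defined; otherwise there is nothing to add.
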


\begin{cor}\label{Miss}
If $\overline{\mathfrak{v}}(D(\varphi))=\Gamma/2\Gamma$, then the form $\varphi$ in Lemma \ref{CGV} satisfies $\{\overline{\mathfrak{v}}(q) : q \in D(\varphi')\}=\Gamma/2 \Gamma \setminus \{\overline{\mathfrak{v}}(\alpha_n)\}$.
\end{cor}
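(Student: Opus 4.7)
The plan is to realise the pure part $\varphi'$ as the restriction of $\varphi$ to a codimension-one coordinate subspace of $V$ in the basis supplied by Lemma~\ref{CGV}, and to read off the answer directly from part (a) of that lemma. Concretely, I would match the basis $\{v_{\mathbf{d}}:\mathbf{d}\in I\}$ to the decomposition $\varphi=\langle\langle \alpha_1,\ldots,\alpha_{n-1}\rangle\rangle_b\otimes[1,\alpha_n]$: taking a diagonal basis $\{u_{\mathbf{d}'}:\mathbf{d}'\in\{0,1\}^{n-1}\}$ of the bilinear factor and the standard basis $\{e_0,e_1\}$ of $[1,\alpha_n]$, set $v_{(\mathbf{d}',d_n)}:=u_{\mathbf{d}'}\otimes e_{d_n}$, so that $\varphi(c v_{\mathbf{d}})=\alpha^{\mathbf{d}}c^{2}$ for every $c\in F$. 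The standard decomposition $\varphi=[1,\alpha_n]\perp\varphi''$ and $\varphi'=\langle 1\rangle\perp\varphi''$ then identifies $\varphi'$ isometrically with the restriction of $\varphi$ to $V':=\spn\{v_{\mathbf{d}}:\mathbf{d}\in I\setminus\{(0,\ldots,0,1)\}\}$, so that $D(\varphi')=\{\varphi(v):v\in V',\ \varphi(v)\neq 0\}$.

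For the inclusion ``$\supseteq$'' I would observe that for every $\mathbf{d}\neq(0,\ldots,0,1)$ the vector $v_{\mathbf{d}}$ lies in $V'$ and contributes $\varphi(v_{\mathbf{d}})=\alpha^{\mathbf{d}}\in D(\varphi')$. The hypothesis $\overline{\mathfrak{v}}(D(\varphi))=\Gamma/2\Gamma$ together with Lemma~\ref{CGV}(c) forces the classes $\overline{\mathfrak{v}}(\alpha_1),\ldots,\overline{\mathfrak{v}}(\alpha_n)$ to be an $\mathbb{F}_2$-basis of $\Gamma/2\Gamma$, hence $\mathbf{d}\mapsto\overline{\mathfrak{v}}(\alpha^{\mathbf{d}})$ is a bijection $I\to\Gamma/2\Gamma$ and removing $(0,\ldots,0,1)$ removes exactly the class $\overline{\mathfrak{v}}(\alpha_n)$, yielding the desired $2^n-1$ distinct classes.

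For the reverse inclusion I would take an arbitrary $v=\sum_{\mathbf{d}}c_{\mathbf{d}}v_{\mathbf{d}}\in V'$ with $\varphi(v)\neq 0$ and apply Lemma~\ref{CGV}(a): there is some $\mathbf{d}$ with $c_{\mathbf{d}}\neq 0$ such that $\mathfrak{v}(\varphi(v))=\mathfrak{v}(\alpha^{\mathbf{d}}c_{\mathbf{d}}^{2})$. Since $c_{(0,\ldots,0,1)}=0$ by the very definition of $V'$, the forced index satisfies $\mathbf{d}\neq(0,\ldots,0,1)$, and the $\mathbb{F}_2$-independence used above then gives $\overline{\mathfrak{v}}(\varphi(v))=\overline{\mathfrak{v}}(\alpha^{\mathbf{d}})\neq\overline{\mathfrak{v}}(\alpha_n)$. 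The only real obstacle I anticipate is bookkeeping: arranging the basis $\{v_{\mathbf{d}}\}$ so that the coordinate playing the role of the ``$\alpha_n$-slot'' is transparently the single index $(0,\ldots,0,1)$; once that matching is in place, both inclusions fall out of Lemma~\ref{CGV}.
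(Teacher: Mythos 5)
Your proposal is correct and follows exactly the paper's route: the paper's proof consists precisely of the two observations you elaborate, namely that $\varphi'$ is the restriction of $\varphi$ to $\spn\{v_{\bf d} : {\bf d} \in I\setminus\{(0,\dots,0,1)\}\}$ and that the $\mathbb{F}_2$-linear independence of the $\overline{\mathfrak{v}}(\alpha_i)$ (an $\mathbb{F}_2$-basis of $\Gamma/2\Gamma$, by the hypothesis combined with Lemma~\ref{CGV}(c)) makes ${\bf d}\mapsto\overline{\mathfrak{v}}({\bf \alpha^d})$ a bijection, with Lemma~\ref{CGV}(a) pinning down the value of any element of $D(\varphi')$. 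You have simply written out in full the details the paper compresses into ``follows immediately.''
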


\begin{proof}
Follows immediately from the fact that $\varphi'$ is the restriction of $\varphi$ to the subspace of $V$ spanned by $\{v_{\bf d} : {\bf d} \in I \setminus \{(0,\dots,0,1)\}\}$ and from the linear independence of the images of $\alpha_1,\dots,\alpha_n$ under $\overline{\mathfrak{v}}$ over $\mathbb{F}_2$.
\end{proof}

\begin{thm}\label{Main}
\sloppy Let $n$ be an integer $\geq 2$ and $F$ be a field of $\operatorname{char}(F)=2$ with a discrete rank $n$ valuation.
Then there exist $2^n-1$ quadratic $n$-fold Pfister forms with no common quadratic inseparable splitting field.
\end{thm}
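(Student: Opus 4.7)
My plan is to match the $2^n-1$ sought forms with the $2^n-1$ nonzero classes in $\Gamma/2\Gamma\cong\mathbb{F}_2^n$. For each nonzero $\delta\in\Gamma/2\Gamma$ I would pick elements $\alpha_1^{(\delta)},\dots,\alpha_n^{(\delta)}\in F^\times$ of negative value whose images form a basis of $\Gamma/2\Gamma$, arranged so that $\overline{\mathfrak{v}}(\alpha_n^{(\delta)})=\delta$, and set $\varphi_\delta=\langle\langle\alpha_1^{(\delta)},\dots,\alpha_{n-1}^{(\delta)},\alpha_n^{(\delta)}]]$. By Lemma~\ref{CGV}(b) each $\varphi_\delta$ is anisotropic. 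Decomposing $\varphi_\delta=[1,\alpha_n^{(\delta)}]\perp\varphi''_\delta$, the reasoning of Lemma~\ref{CGV}(a) and Corollary~\ref{Miss} applied to the natural basis of $\varphi''_\delta$ yields $\{\overline{\mathfrak{v}}(q):q\in D(\varphi''_\delta)\}=(\Gamma/2\Gamma)\setminus\{0,\delta\}$; the class $0$ drops out because $v_{\bf 0}$ is not in the basis of $\varphi''_\delta$, and $\delta=\overline{\mathfrak{v}}(\alpha_n^{(\delta)})$ drops out exactly as in Corollary~\ref{Miss}.

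Suppose toward a contradiction that $F[\sqrt{\gamma}]$ is a common inseparable splitting field of all the $\varphi_\delta$, so $\gamma\notin F^2$. By Lemma~\ref{Common}, $\langle 1,\gamma\rangle$ is a subform of $\varphi'_\delta=\langle 1\rangle\perp\varphi''_\delta$ for every nonzero $\delta$. Since $\overline{\mathfrak{v}}(D(\varphi''_\delta))$ excludes $0$, any $v$ with $\varphi'_\delta(v)=1$ must equal $v_{\bf 0}$; and in characteristic $2$ the vector $v_{\bf 0}$ lies in the radical of the polar bilinear form of $\varphi'_\delta$, so the orthogonality demanded by the subform definition is automatic. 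Writing a subform partner of $v_{\bf 0}$ as $v_2=av_{\bf 0}+w$ with $0\ne w\in\varphi''_\delta$ and $\varphi'_\delta(v_2)=a^2+\varphi''_\delta(w)=\gamma$, the subform condition becomes
\[
\gamma+a^2\in D(\varphi''_\delta)\text{ for some }a\in F.
\]

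The decisive valuation input is the characteristic-$2$ identity $(\gamma+a^2)+(\gamma+b^2)=(a+b)^2$, whose right-hand side has $\overline{\mathfrak{v}}=0$. The ultrametric dichotomy forces the set $\{\overline{\mathfrak{v}}(\gamma+a^2):a\in F,\ \gamma+a^2\neq 0\}$ to lie in $\{0,\delta_\gamma\}$ for a single class $\delta_\gamma\in\Gamma/2\Gamma$ depending only on $\gamma$: if $\overline{\mathfrak{v}}(\gamma+a^2)$ and $\overline{\mathfrak{v}}(\gamma+b^2)$ were distinct and both nonzero, their representatives in $\Gamma$ would be unequal, and $\overline{\mathfrak{v}}$ of their sum would inherit one of those nonzero classes, contradicting $\overline{\mathfrak{v}}((a+b)^2)=0$. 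Combined with the previous step, for every nonzero $\delta$ the existence of $a_\delta\in F$ with $\gamma+a_\delta^2\in D(\varphi''_\delta)$ forces $\delta_\gamma\notin\{0,\delta\}$; but if $\delta_\gamma=0$ no $\delta$ works, and if $\delta_\gamma\ne 0$ the form $\varphi_{\delta_\gamma}$ in our family admits no such $a_{\delta_\gamma}$, giving the desired contradiction in either case.

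The main obstacle I foresee is the valuation dichotomy in the last paragraph --- verifying via the ultrametric inequality applied to the characteristic-$2$ identity that the coset $\gamma+F^2$ realises at most two classes in $\Gamma/2\Gamma$ under $\overline{\mathfrak{v}}$. The reduction of Lemma~\ref{Common}'s subform condition to the algebraic form $\gamma+a^2\in D(\varphi''_\delta)$ should be a short bookkeeping exercise once one notices that $v_{\bf 0}$ is both the only representative of $1$ and lies in the radical of the polar bilinear form of $\varphi'_\delta$.
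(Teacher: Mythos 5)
Your proof is correct and follows essentially the same strategy as the paper: index the $2^n-1$ forms by the nonzero classes of $\Gamma/2\Gamma$ so that $\overline{\mathfrak{v}}(D(\varphi_\delta'))$ misses exactly $\delta$, invoke Lemma~\ref{Common} to get a common two-dimensional totally singular subform, and derive a contradiction from the triviality of the intersection of these value sets. The only (harmless) deviations are that you allow a fresh generating set for each form where the paper builds all forms from a single family $\alpha_1,\dots,\alpha_n$ via the monomials ${\bf\alpha^d}$, and that your closing ultrametric dichotomy on the coset $\gamma+F^2$ re-proves by hand exactly what the paper obtains by citing Corollary~\ref{Contr}.
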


\begin{proof}
Write $\mathfrak{v}$ for the valuation and $\Gamma (\cong \mathbb{Z}^{\times n})$ for the group.
Write $\overline{\mathfrak{v}}$ for the function mapping each $q \in F^\times$ to the class of $q$ in $\Gamma/2 \Gamma$. Let $\alpha_1,\dots,\alpha_n$ be elements in $F^\times$ of negative values whose images under $\overline{\mathfrak{v}}$ are linearly independent over $\mathbb{F}_2$. 
Let $I$, ${\bf 0}$, ${\bf d}$ and ${\bf\alpha^d}$ be the same as in Theorem \ref{MainB}.
For every ${\bf d} \in I \setminus \{{\bf 0}\}$, let $\varphi_{\bf d}$ be $\langle \langle \alpha_1,\dots,\widehat{\alpha_\ell},\dots,\alpha_n \rangle \rangle \otimes \langle \langle {\bf \alpha^d}]]$ where $\ell$ is the minimal integer in $\{1,\dots,n\}$ for which $d_\ell \neq 0$. We will show that the forms $\left\{\varphi_{\bf d} : {\bf d} \in I \setminus \{{\bf 0}\}\right\}$ do not have a common inseparable quadratic splitting field.

By Corollary \ref{Miss}, for each ${\bf d} \in I \setminus \{{\bf 0}\}$, $\overline{\mathfrak{v}}(D(\varphi_{\bf d}'))=\Gamma/2 \Gamma \setminus \overline{\mathfrak{v}}({\bf \alpha^d})$.
Therefore 
$$\bigcap_{{\bf d} \in I \setminus \{{\bf 0}\}} \overline{\mathfrak{v}}(D(\varphi_{\bf d}'))=\{\overline{{\bf 0}}\}.$$
However, by Lemma \ref{Common}, if the forms $\varphi_{\bf d}$ have a common inseparable quadratic splitting field then the forms $\varphi'_{{\bf d}}$ have a common 2-dimensional subform. All the elements $q$ represented by this 2-dimensional subform must satisfy $\overline{\mathfrak{v}}(q)=\overline{\bf 0}$, which contradicts Corollary \ref{Contr}.
\end{proof}

Note that the forms appearing in the statement of Theorem \ref{Main} do not have a bilinear $(n-1)$-fold Pfister form as a common factor, because they do not even share one inseparable quadratic splitting field. When $n \geq 3$ these forms do not have a quadratic $(n-1)$-fold Pfister form as a common factor for the same reason.

For the construction of counterexamples for Question \ref{q2} we need a necessary condition for $I_q^n F$ to be separably 3-linked.

\begin{lem}\label{rightfactor}
Let $\varphi=\langle \langle a_1,\dots,a_n ]]$ be an $n$-fold quadratic Pfister over a field $F$ with $\operatorname{char}(F)=2$.
Write $\varphi=[a_n,1] \perp \varphi''$
and consider $d \in D(\varphi)$ such that
$d=\varphi(w,x,u_1,\dots,u_{2^n-2})=a_n w^2+w x+x^2+\varphi''(u_1,\dots,u_{2^n-2})$ for some $w,x,u_1,\dots,u_{2^n-2} \in F$ with $w \neq 0$.
Then there exist $b_1,\dots,b_{n-1} \in F^\times$ such that
$\varphi=\langle \langle b_1,\dots,b_{n-1},\frac{d}{w^2}]]$.
\end{lem}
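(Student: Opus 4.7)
The plan is to first exhibit $[1,d/w^2]$ as a subform of $\varphi$ by a direct construction in the underlying vector space, and then invoke a standard structural property of quadratic Pfister forms to conclude.

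First I would fix an orthogonal decomposition $V=V_0\oplus V''$ of the underlying space of $\varphi$ corresponding to $\varphi=[a_n,1]\perp\varphi''$, with basis $\{e_1,e_2\}$ of $V_0$ satisfying $\varphi(e_1)=a_n$, $\varphi(e_2)=1$, $B_\varphi(e_1,e_2)=1$. Let $v'\in V''$ be the vector whose coordinates are $u_1,\dots,u_{2^n-2}$, and set $z:=we_1+xe_2+v'$ and $z':=e_2$. A direct computation---using $B_\varphi(e_2,e_2)=2\varphi(e_2)=0$ in characteristic $2$ and that $V_0$ and $V''$ are $B_\varphi$-orthogonal---yields $\varphi(z)=d$, $\varphi(z')=1$, and $B_\varphi(z,z')=w$. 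After rescaling $z$ by $1/w$, the restriction of $\varphi$ to the plane $W:=\Span{z,z'}$ is isometric to $[d/w^2,1]\cong[1,d/w^2]$. The polar form on $W$ has the nondegenerate matrix $\left(\begin{smallmatrix}0&1\\1&0\end{smallmatrix}\right)$, so $W$ is $B_\varphi$-nondegenerate and $V=W\oplus W^\perp$ gives an orthogonal decomposition $\varphi\cong[1,d/w^2]\perp\psi$ with $\psi:=\varphi|_{W^\perp}$.

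To finish, I would apply the following standard fact: whenever $[1,a]$ occurs as a subform of a quadratic $n$-fold Pfister form $\varphi$ in characteristic $2$, that Pfister form admits a presentation $\langle\langle b_1,\dots,b_{n-1},a]]$ for some $b_1,\dots,b_{n-1}\in F^\times$ (see \cite{EKM}). The reason is that the existence of such a subform forces $\varphi_L$ to be isotropic, and hence hyperbolic, over the Artin--Schreier extension $L:=F[t]/(t^2+t+a)$, so $L$ splits $\varphi$; a standard descent then produces a bilinear $(n-1)$-fold Pfister form $B$ with $\varphi\cong B\otimes[1,a]$. Applying this with $a=d/w^2$ yields the required $b_i$.

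The main obstacle is this last step, which relies on the descent result for Pfister forms split by a separable quadratic extension. The two-dimensional linear-algebra computation is elementary; the nontrivial input is the classical fact that a quadratic Pfister form split by $F[t]/(t^2+t+a)$ necessarily has $a$ as its last quadratic slot in some presentation.
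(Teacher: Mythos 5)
Your proof is correct and takes essentially the same route as the paper: you restrict $\varphi$ to the plane spanned by (a rescaling of) the vector $(w,x,u_1,\dots,u_{2^n-2})$ and $(0,1,0,\dots,0)$ to exhibit the binary subform $[d/w^2,1]$, which is exactly the paper's construction. The only difference is at the final step, where the paper simply cites Baeza's representation lemma \cite[Chapter 4, Lemma 4.1]{Baeza:1978} while you sketch a proof of that same standard fact via hyperbolicity over the Artin--Schreier extension and descent; this is an acceptable substitute.
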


\begin{proof}
Let $v_1$ be the vector $(1,\frac{x}{w},\frac{u_1}{w},\dots,\frac{u_{2^n-2}}{w})$, and $v_2$ be the vector $(0,1,0,\dots,0)$.
Then the subform $\varphi|_{F v_1+F v_2}$ is isometric to $[a_n+\frac{x}{w}+\frac{x^2}{w^2}+\varphi''(\frac{u_1}{w},\dots,\frac{u_{2^n-2}}{w}),1]$. 
By \cite[Chapter 4, Lemma 4.1]{Baeza:1978}, there exist $b_1,\dots,b_{n-1} \in F$ such that 
$$\varphi=\langle \langle b_1,\dots,b_{n-1},a_n+\frac{x}{w}+\frac{x^2}{w^2}+\varphi''(\frac{u_1}{w},\dots,\frac{u_{2^n-2}}{w})]].$$
\end{proof}

Recall the $u(F)$ is the maximal dimension of an anisotropic nonsingular quadratic form over $F$ (see \cite[Page 163]{EKM}).

\begin{prop}[{cf. \cite[Corollary 5.4]{Becher}}]\label{Triple}
Let $n$ be an integer $\geq 3$ and $F$ be a field of $\operatorname{char}(F)=2$ such that the function field $K=F(t)$ in one variable over $F$ has $u(K) \leq 2^{n+1}$. Then $I_q^n F$ is separably 3-linked.
\end{prop}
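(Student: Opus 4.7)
The strategy is to mirror the approach of \cite[Corollary 5.4]{Becher}, adapted to characteristic $2$ by substituting the slot-detection criterion of Lemma \ref{rightfactor} for Becher's use of subtraction in the Witt ring. Given three anisotropic quadratic $n$-fold Pfister forms $\varphi_1, \varphi_2, \varphi_3$ over $F$, I would pass to $K = F(t)$, exploit the bound $u(K) \leq 2^{n+1}$ to force a large Witt index in a carefully chosen $K$-form built from the $\varphi_i$, and then descend to $F$ via a specialization of the transcendental parameter $t$, extracting a common right slot of the three forms by means of Lemma \ref{rightfactor}. Iteration (or an induction on $n$) should then upgrade a common quadratic $1$-fold slot to a full common quadratic $(n-1)$-fold Pfister factor.

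Concretely, consider the $K$-form
\[
\Phi_K = \varphi_{1,K} \perp \varphi_{2,K} \perp t\varphi_{3,K}
\]
of dimension $3 \cdot 2^n$. Since $u(K) \leq 2^{n+1}$, the anisotropic part of $\Phi_K$ has dimension at most $2^{n+1}$, so $\Phi_K$ has Witt index at least $2^{n-1}$. A totally isotropic subspace $W \subseteq \Phi_K$ of dimension $2^{n-1}$ supplies many polynomial identities of the form $\varphi_1(w_1(t)) + \varphi_2(w_2(t)) + t\varphi_3(w_3(t)) = 0$ with $w_i(t)$ vectors over $F[t]$. By clearing denominators and comparing coefficients of $t$ (or specializing at a suitable $t_0 \in F$), one extracts an $F$-level identity producing a common value $d \in D(\varphi_1) \cap D(\varphi_2) \cap D(\varphi_3)$ together with nonzero leading components $w_i$ in the sense of Lemma \ref{rightfactor}. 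That lemma then yields right slots $\langle\langle d/w_i^2]]$ of each $\varphi_i$, which the extracted identity is designed to force to coincide modulo $\wp(F) = \{x^2+x : x \in F\}$, giving a common quadratic $1$-fold right slot $\langle\langle \alpha]]$.

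Writing $\varphi_i = B_i \otimes \langle\langle \alpha]]$ with anisotropic bilinear $(n-1)$-fold Pfister forms $B_i$, a further application of the same machinery to the $B_i$ (or an induction on $n$ using the reduced forms modulo $\langle\langle \alpha]]$) produces a common bilinear $(n-2)$-fold Pfister factor of $B_1, B_2, B_3$, and tensoring with $\langle\langle \alpha]]$ delivers the desired common quadratic $(n-1)$-fold Pfister factor. The main obstacle is the descent step: arranging the specialization so that the three slots $\langle\langle d/w_i^2]]$ produced by Lemma \ref{rightfactor} are genuinely congruent modulo $\wp(F)$, not merely slots of the three forms individually. In characteristic $\neq 2$ Becher enforces this compatibility by building signed sums into the $K$-form; in characteristic $2$ the analogue requires using the asymmetric role of the $t$-coefficient of $\varphi_3$ to isolate its contribution, and exploiting the large dimension of the isotropic subspace $W$ to fine-tune the choice of $w_1(t), w_2(t), w_3(t)$ so that the resulting Artin--Schreier classes of $d/w_i^2$ match.
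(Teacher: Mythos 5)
There is a genuine gap, and it sits exactly where you flag ``the main obstacle'': the compatibility of the three slots is never actually established, and the form you choose cannot establish it. The orthogonal sum $\Phi_K=\varphi_{1,K}\perp\varphi_{2,K}\perp t\varphi_{3,K}$ is a pencil $Q_1+tQ_2$ with $Q_1=\varphi_1\perp\varphi_2\perp 0$ and $Q_2=0\perp 0\perp\varphi_3$ on independent variable blocks, so by the Amer--Brumer criterion (\cite[Theorem 17.14]{EKM}) its isotropy over $K$ is equivalent to a common nontrivial zero of $Q_1$ and $Q_2$ over $F$; since $\varphi_3$ is anisotropic this forces $v_3=0$ and the conclusion degenerates to information about $\varphi_1$ and $\varphi_2$ only --- $\varphi_3$ drops out entirely. ``Comparing coefficients of $t$'' or specializing $t$ does not recover what is lost. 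The paper instead encodes the two \emph{equalities} $\varphi_1(w,0,v_1)=\varphi_2(w,x_2,v_2)=\varphi_3(w,x_3,v_3)$, with the leading coordinate $w$ \emph{shared} among all three forms, as a system of two quadratic equations over $F$, and applies \cite[Theorem 17.14]{EKM} to \emph{that} system; the associated pencil has dimension $3(2^n-1)>2^{n+1}$, hence is isotropic. The shared $w$ and the shared value $d$ are then what make Lemma~\ref{rightfactor} return the \emph{identical} right slot $\langle\langle d/w^2]]$ for all three forms --- no matching of Artin--Schreier classes has to be arranged afterwards, because it is built into the system. Your proposal leaves this matching as an unproved aspiration.

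A second omission: you never treat the degenerate solution with $w=0$, and correspondingly your argument never uses the hypothesis $n\geq 3$. This is not cosmetic. When $w=0$ the common value lies in the pure parts and only yields a common \emph{bilinear} $1$-fold factor via \cite[Lemma 3.5]{AravireBaeza:1992}; converting that into a common right (separable) slot requires \cite[Corollary 6.2]{ChapmanMcKinnie}, which is where $n\geq 3$ enters, and the proposition is genuinely open for $n=2$. Your induction step (finding a common $\langle\langle\gamma\rangle\rangle$ with $\langle\langle\gamma\rangle\rangle\otimes\rho$ a common factor) is in the right spirit, but it too must be run through the same two-equation/pencil mechanism applied to the forms $B_i'\otimes\rho$ of dimension $2^n-2^k$, with the count $3(2^n-2^k)>2^{n+1}$ requiring $k\leq n-2$; as written, ``a further application of the same machinery'' inherits the unresolved compatibility problem from the first step.
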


\begin{proof}
Let $\varphi_1,\varphi_2$ and $\varphi_3$ be three anisotropic quadratic $n$-fold Pfister forms over $F$.
Write $\varphi_i=[1,\alpha_i] \perp \varphi_i''$ for $i \in \{1,2,3\}$.
The system of two quadratic equations
$$\alpha_1 w^2+\varphi_1''(v_1)=\alpha_2 w^2+w x_2+x_2^2+\varphi_2''(v_2)$$
$$\alpha_1 w^2+\varphi_1''(v_1)=\alpha_3 w^2+w x_3+x_3^2+\varphi_3''(v_3)$$
has a solution over $F$ if and only if the quadratic form $$\psi : K \times K \times K \times K^{\times (2^n-2)} \times K^{\times (2^n-2)} \times K^{\times (2^n-2)} \rightarrow K$$ mapping $(w,x_2,x_3,v_1,v_2,v_3)$ to
$$\alpha_1 w^2+\varphi_1''(v_1)+\alpha_2 w^2+w x_2+x_2^2+\varphi_2''(v_2)+t(\alpha_1 w^2+\varphi_1''(v_1)+\alpha_3 w^2+w x_3+x_3^2+\varphi_3''(v_3))$$
is isotropic by \cite[Theorem 17.14]{EKM}.
The form $\psi$ is of dimension $3 \cdot (2^n-1)$ which is greater than $2^{n+1}$.
Therefore $\psi$ is isotropic ($u(K) \leq 2^{n+1}$), and so the system above has a solution over $F$. 
If in this solution $w \neq 0$ then by Lemma \ref{rightfactor} the forms $\varphi_1, \varphi_2$ and $\varphi_3$ have a common right slot, i.e. $\varphi_i=\rho_i \otimes \langle \langle \alpha]]$ for $i \in \{1,2,3\}$ for some $\alpha \in F$ and bilinear $(n-1)$-fold Pfister forms $\rho_1,\rho_2$ and $\rho_3$. If the solution has $w=0$ then by \cite[Lemma 3.5]{AravireBaeza:1992}, $\varphi_1$, $\varphi_2$ and $\varphi_3$ have a common bilinear 1-fold Pfister form as a common factor. By \cite[Corollary 6.2]{ChapmanMcKinnie}, since $n \geq 3$, the forms $\varphi_1, \varphi_2$ and $\varphi_3$ also have a common right slot, so they have a common right slot regardless of $w$.

Write $\varphi_i=B_i \otimes \rho$ for $i \in \{1,2,3\}$ some bilinear $(n-k)$-fold Pfister forms $B_1,B_2,B_3$ and some quadratic $k$-fold Pfister form $\rho$ where $k$ is an integer in $\{1,\dots,n-2\}$. The system of two equations
$$(B_1' \otimes \rho)(v_1)=(B_2' \otimes \rho)(v_2)$$
$$(B_1' \otimes \rho)(v_1)=(B_3' \otimes \rho)(v_3)$$
has a solution over $F$ if and only if the quadratic form
$$\theta : K^{\times (2^n-2^k)} \times K^{\times (2^n-2^k)} \times K^{\times (2^n-2^k)}$$
mapping $(v_1,v_2,v_3)$ to 
$$(B_1' \otimes \rho)(v_1)+(B_2' \otimes \rho)(v_2)+t((B_1' \otimes \rho)(v_1)=(B_3' \otimes \rho)(v_3))$$
is isotropic by \cite[Theorem 17.14]{EKM}.
The dimension of $\theta$ is $3 \cdot (2^n-2^k)$ which is greater than $2^{n+1}$ because $k \leq n-2$. Therefore by \cite[Lemma 3.5]{AravireBaeza:1992} there exists $\gamma \in F^\times$ such that $\langle \langle \gamma \rangle \rangle \otimes \rho$ is a common factor of $\varphi_1,\varphi_2$ and $\varphi_3$.
The statement then follows by induction.
\end{proof}

We are now ready to give negative answers to Questions \ref{q2} and \ref{q3}:
\begin{exmpl}
Let $F_0$ be an algebraically closed field of $\operatorname{char}(F_0)=2$, such as the separable closure of $\mathbb{F}_2$, and let $F$ be either the function field $F_0(\alpha_1,\dots,\alpha_n)$ in $n$ algebraically independent variables, or the field $F_0((\alpha_1^{-1}))\dots((\alpha_n^{-1}))$ of iterated Laurent series in $n$ variables over $F_0$.
In these cases the maximal dimension of an anisotropic form in $I_q^n F$ is $2^n$, so $I_q^n F$ is inseparably 2-linked.
However, $F$ has a discrete rank $n$ valuation, and therefore there exist $(2^n-1)$ quadratic $n$-fold Pfister forms without a common quadratic inseparable splitting field, providing a negative answer to Question \ref{q2}.
Moreover, these fields are $C_n$ fields (\cite[Section 97]{EKM}), and therefore $u(F(t))=2^{n+1}$.
By Proposition \ref{Triple}, when $n \geq 3$, $I_q^n F$ is separably 3-linked. However, $I_q^n F$ is not separably $(2^n-1)$-linked for the reason mentioned above, giving a negative answer to Question \ref{q3} (when $n \geq 3$).
\end{exmpl}

Our ability to answer Question \ref{q3} when $n \geq 3$ relies heavily on the fact that when $n \geq 3$, quadratic $n$-fold Pfister forms with a common quadratic $(n-1)$-fold Pfister factor must have a common inseparable quadratic splitting field.
This is certainly not true for $n=2$, and we leave Question \ref{q3} in this case open.
The existence of inseparable quadratic field extensions is special to the case of $\operatorname{char}(F)=2$, so our techniques do not apply (at least not in an obvious manner) to the more common case of $\operatorname{char}(F) \neq 2$.
\section{Quaternion Algebras}\label{Quaternion}

Given a field $F$ of $\operatorname{char}(F)=2$, a quaternion algebra over $F$ is of the form 
$$(\beta,\alpha]_{2,F}=F \langle x,y : x^2+x=\alpha, y^2=\beta, y x y^{-1}=x+1 \rangle$$
for some $\alpha \in F$ and $\beta \in F^\times$. There is a one-to-one correspondence between quaternion algebras $(\beta,\alpha]_{2,F}$ and their norm forms $\langle \langle \beta,\alpha ]]$ which are quadratic 2-fold Pfister forms (see \cite[Section 12]{EKM} and \cite[Section 6]{ChapmanDolphinLaghribi:2016}). In particular, the splitting  fields of the quaternion algebra and its norm form are the same. 

We therefore obtain the following:

\begin{thm}\label{quat}
Let $F$ be a field of $\operatorname{char}(F)=2$ with a rank 2 valuation $\mathfrak{v}$ and value group $\Gamma (\cong \mathbb{Z} \times \mathbb{Z})$.
Write $\overline{\mathfrak{v}}$ for the function mapping each $q \in F^\times$ to the class of $q$ in $\Gamma/2 \Gamma$. Let $\alpha,\beta$ be elements in $F^\times$ of negative values whose images under $\overline{\mathfrak{v}}$ are linearly independent over $\mathbb{F}_2$. Let $Q_1=(\beta,\alpha]_{2,F}$, $Q_2=(\alpha,\beta]_{2,F}$ and $Q_3=(\beta,\alpha \beta]_{2,F}$. Then $Q_1$, $Q_2$ and $Q_3$ do not have a common inseparable quadratic splitting field.
\end{thm}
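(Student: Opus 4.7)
The plan is to recognize this as a direct application of Theorem \ref{Main} in the case $n=2$, routed through the well-known one-to-one correspondence between quaternion algebras over $F$ and their norm forms.

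First I would set $\alpha_1 = \alpha$ and $\alpha_2 = \beta$ and unwind the notation of Theorem \ref{Main} for $n=2$. The index set $I \setminus \{{\bf 0}\}$ has three elements, and a direct computation of $\varphi_{\bf d}$ for each produces, up to obvious isometries,
\[
\varphi_{(1,0)} = \langle\langle \beta, \alpha]], \quad \varphi_{(0,1)} = \langle\langle \alpha, \beta]], \quad \varphi_{(1,1)} = \langle\langle \beta, \alpha\beta]].
\]
These are precisely the norm forms of $Q_1$, $Q_2$, and $Q_3$, respectively.

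Next I would invoke Theorem \ref{Main} with $n=2$: since $F$ carries a discrete rank $2$ valuation and $\alpha,\beta$ satisfy the hypotheses of the theorem, the three forms $\varphi_{(1,0)}, \varphi_{(0,1)}, \varphi_{(1,1)}$ admit no common inseparable quadratic splitting field over $F$.

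Finally, I would transfer this conclusion to the quaternion algebras using the fact recalled in the preceding paragraph of the paper: a field extension $K/F$ splits a quaternion algebra $(\gamma,\delta]_{2,F}$ if and only if it splits the associated norm form $\langle\langle \gamma, \delta]]$. Hence an inseparable quadratic extension $F[\sqrt{\gamma}]$ splits all three $Q_i$ simultaneously if and only if it splits all three $\varphi_{\bf d}$ simultaneously, which we have just ruled out. There is no real obstacle here beyond being careful that the three particular Pfister forms produced by the construction of Theorem \ref{MainB}/Theorem \ref{Main} line up with the norm forms of the three specific quaternion algebras listed in the statement, which the computation above confirms.
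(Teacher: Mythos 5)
Your proposal is correct and matches the paper exactly: the paper offers no separate proof of Theorem~\ref{quat}, deriving it (``We therefore obtain the following'') from Theorem~\ref{Main} with $n=2$ via the quaternion-algebra/norm-form correspondence, and your unwinding of $\varphi_{(1,0)},\varphi_{(0,1)},\varphi_{(1,1)}$ as $\langle\langle\beta,\alpha]]$, $\langle\langle\alpha,\beta]]$, $\langle\langle\beta,\alpha\beta]]$ is precisely right.
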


Fields $F$ with $u(F)=4$ are the fields over which every two quaternion algebras share an inseparable quadratic splitting field (\cite[Theorem 3.1]{Baeza:1982}). If every three quaternion algebras over $F$ share an inseparable quadratic splitting field, it does not affect the $u(F)$. Nevertheless, there exist fields that do not have this property while still having $u(F)=4$:

\begin{exmpl}
Let $F_0$ be an algebraically closed field of $\operatorname{char}(F_0)=2$. Let $F$ be either the function field $F_0(\alpha,\beta)$ in $2$ algebraically independent variables over $F_0$, or the field of iterated Laurent series $F_0((\alpha^{-1}))((\beta^{-1}))$ in $2$ variables over $F_0$. Then every pair of quaternion algebras over $F$ share a quadratic inseparable splitting field, but not every triple.
\end{exmpl}

\begin{proof}
The field $F$ in both cases is a $C_2$ field (see \cite[Section 97]{EKM}) with nontrivial quaternion algebras, and so $u(F)=4$.
Therefore every two quaternion algebras over $F$ share a quadratic inseparable splitting field. However, by Theorem \ref{quat} there exist three quaternion algebras that do not share a quadratic inseparable splitting field.
\end{proof}

There are still fields over which every collection of quaternion algebras share a quadratic inseparable splitting field, as the following example demonstrates. This means that unlike Question \ref{q1}, the answer to Question \ref{q2} is not always negative.

\begin{exmpl}
Let $F_0$ be a perfect field of $\operatorname{char}(F_0)=2$ with nontrivial $\text{\'Et}_2(F)$ (e.g. any finite field). Let $F$ be either the function field $F_0(\alpha)$ in one variable over $F_0$, or the field of Laurent series $F_0((\alpha))$ over $F_0$. Then any finite number of quaternion algebras over $F$ share a quadratic inseparable splitting field, because $F$ has a unique quadratic inseparable field extension.
\end{exmpl}

\section*{Acknowledgements}

We thank Adrian Wadsworth for his comments on the manuscript.
We also thank the anonymous referee for the very helpful comments and suggestions which improved both the quality of the paper and its readability and coherency.

\section*{Bibliography}
\def\cprime{$'$}


\begin{thebibliography}{10}

\bibitem{ArasonBaeza:2007}
J.~K. Arason and R.~Baeza.
\newblock Relations in {$I^n$} and {$I^nW_q$} in characteristic 2.
\newblock {\em J. Algebra}, 314(2):895--911, 2007.

\bibitem{AravireBaeza:1992}
R.~Aravire and R.~Baeza.
\newblock Milnor's {$k$}-theory and quadratic forms over fields of
  characteristic two.
\newblock {\em Comm. Algebra}, 20(4):1087--1107, 1992.

\bibitem{Baeza:1978}
R.~Baeza.
\newblock {\em Quadratic forms over semilocal rings}.
\newblock Lecture Notes in Mathematics, Vol. 655. Springer-Verlag, Berlin-New
  York, 1978.

\bibitem{Baeza:1982}
R.~Baeza.
\newblock Comparing {$u$}-invariants of fields of characteristic {$2$}.
\newblock {\em Bol. Soc. Brasil. Mat.}, 13(1):105--114, 1982.

\bibitem{Becher}
K.~J. Becher.
\newblock Triple linkage.
\newblock {\em Ann. K-Theory}.
\newblock to appear.

\bibitem{Chapman:2015}
A.~Chapman.
\newblock Common subfields of {$p$}-algebras of prime degree.
\newblock {\em Bull. Belg. Math. Soc. Simon Stevin}, 22(4):683--686, 2015.

\bibitem{ChapmanDolphin:2017}
A.~Chapman and A.~Dolphin.
\newblock Differential forms, linked fields, and the u-invariant.
\newblock {\em Arch. Math. (Basel)}, 109(2):133--142, 2017.

\bibitem{ChapmanDolphinLaghribi:2016}
A.~Chapman, A.~Dolphin, and A.~Laghribi.
\newblock Total linkage of quaternion algebras and {P}fister forms in
  characteristic two.
\newblock {\em J. Pure Appl. Algebra}, 220(11):3676--3691, 2016.

\bibitem{ChapmanDolphinLeep}
A.~Chapman, A.~Dolphin, and D.~B. Leep.
\newblock Triple linkage of quadratic {P}fister forms.
\newblock {\em Manuscripta Math.}
\newblock to appear.

\bibitem{ChapmanGilatVishne:2017}
A.~Chapman, S.~Gilat, and U.~Vishne.
\newblock Linkage of quadratic {P}fister forms.
\newblock {\em Comm. Algebra}, 45(12):5212--5226, 2017.

\bibitem{ChapmanMcKinnie}
A.~Chapman and K.~McKinnie.
\newblock Kato-{M}ilne cohomology and polynomial forms.
\newblock {\em J. Pure Appl. Algebra}.
\newblock to appear.

\bibitem{Draxl:1975}
P.~Draxl.
\newblock \"{U}ber gemeinsame separabel-quadratische {Z}erf\"allungsk\"orper
  von {Q}uaternionenalgebren.
\newblock {\em Nachr. Akad. Wiss. G\"ottingen Math.-Phys. Kl. II},
  (16):251--259, 1975.

\bibitem{EKM}
R.~Elman, N.~Karpenko, and A.~Merkurjev.
\newblock {\em The algebraic and geometric theory of quadratic forms},
  volume~56 of {\em American Mathematical Society Colloquium Publications}.
\newblock American Mathematical Society, Providence, RI, 2008.

\bibitem{ElmanLam:1973}
R.~Elman and T.~Y. Lam.
\newblock Quadratic forms and the {$u$}-invariant. {II}.
\newblock {\em Invent. Math.}, 21:125--137, 1973.

\bibitem{Faivre:thesis}
F.~Faivre.
\newblock {\em Liaison des formes de {P}fister et corps de fonctions de
  quadriques en caract\'eristique $2$}.
\newblock 2006.
\newblock Thesis (Ph.D.)--Universit\'e de Franche-Comt\'e.

\bibitem{FriedJarden}
M.~D. Fried and M.~Jarden.
\newblock {\em Field arithmetic}, volume~11 of {\em Ergebnisse der Mathematik
  und ihrer Grenzgebiete. 3. Folge. A Series of Modern Surveys in Mathematics
  [Results in Mathematics and Related Areas. 3rd Series. A Series of Modern
  Surveys in Mathematics]}.
\newblock Springer-Verlag, Berlin, third edition, 2008.
\newblock Revised by Jarden.

\bibitem{Kato:1982}
K.~Kato.
\newblock Symmetric bilinear forms, quadratic forms and {M}ilnor {$K$}-theory
  in characteristic two.
\newblock {\em Invent. Math.}, 66(3):493--510, 1982.

\bibitem{Lam:2002}
T.~Y. Lam.
\newblock On the linkage of quaternion algebras.
\newblock {\em Bull. Belg. Math. Soc. Simon Stevin}, 9(3):415--418, 2002.

\bibitem{TignolWadsworth:2015}
J.-P. Tignol and A.~R. Wadsworth.
\newblock {\em Value Functions on Simple Algebras, and Associated Graded
  Rings}.
\newblock Springer Monographs in Mathematics. Springer, 2015.

\end{thebibliography}
\end{document}